\newtheorem{theorem}{Theorem}[section]
\newtheorem{corollary}[theorem]{Corollary}
\newtheorem{proposition}[theorem]{Proposition}
\newtheorem{conjecture}[theorem]{Conjecture}
\newtheorem{remark}[theorem]{Remark}
\newtheorem{definition}[theorem]{Definition}
\newtheorem{question*}{Question}
\theoremstyle{definition}
\newtheorem{example}[theorem]{Example}
\numberwithin{equation}{section}
\newcommand{\Z}{\mathbb{Z}}
\newcommand{\R}{\mathbb{R}}
\newcommand{\GL}{\mathrm{GL}}
\newcommand{\SL}{\mathrm{SL}}
\renewcommand{\pmod}[1]{\, (\mathrm{mod} {\, #1})}
\newcommand{\ord}{\mathop{\mathrm{ord}}}
\renewcommand{\Re}{\mathrm{Re}}
\patchcmd{\section}{\scshape}{\bfseries}{}{}
\renewcommand{\@secnumfont}{\bfseries}
\makeatletter\newcommand{\tpmod}[1]{{\@displayfalse\pmod{#1}}}
\begin{document}

\title{Chebyshev's bias for modular forms}

\author{Shin-ya Koyama}
\address[Shin-ya Koyama]{Department of Mechanical Engineering, Toyo University, 2100 Kujirai, Kawagoe, Saitama, 350-8585, Japan}
\email{koyama@tmtv.ne.jp}

\author{Arshay Sheth}
\address[Arshay Sheth]{Mathematics Institute, Zeeman Building, University of Warwick, Coventry, CV4 7AL, United Kingdom }
\curraddr{School of Mathematics, Tata Institute of Fundamental Research, Homi Bhabha Road,
Mumbai - 400005, India}
\email{arshay.sheth@warwick.ac.uk, asheth@math.tifr.res.in}

\thanks{The first author is partially supported by Enryo INOUE Memorial grant 2024, Toyo University.}
\thanks{The second author was supported by funding from the European Research Council under the European Union’s Horizon 2020 research and innovation programme (Grant agreement No. 101001051 — Shimura varieties and the Birch--Swinnerton-Dyer conjecture) during the writing of this paper.}

\keywords{}

\begin{abstract}
We study Chebyshev's bias for the signs of Fourier coefficients of cuspidal newforms on $\Gamma_0(N)$. Our main result shows that the bias towards either sign is completely determined by the order of vanishing of the $L$-function $L(s, f)$ at the central point of the critical strip.  We then give several examples of modular forms where we explicitly compute the order of vanishing of $L(s, f)$ at the central point and as a by-product, verify the super-positivity property, in the sense of Yun--Zhang (2017), for these examples. 
\end{abstract}

\maketitle

\section{Introduction}

Chebyshev's bias originally referred to the phenomenon that, even though the primes are equidistributed in the multiplicative residue classes mod 4, there seem to be more primes congruent to 3 mod 4 than 1 mod 4. This was first noted by Chebyshev in a letter to Fuss in 1853, and supported by extensive numerical evidence over the next few decades. However, a rigorous mathematical formulation of the bias remained out of reach for the next century, with many natural formulations running into stumbling blocks. For example, let $\pi(x; q, a)$ denote the number of primes up to $x$ congruent to $a$ modulo $q$ and let $S=\{x \in \mathbb R_{\geq 2}: \pi(x; 4, 3)-\pi(x; 4, 1)>0\}$; then Knapowski--Tur\'{a}n~\cite{KnapowskiTuran1962} conjectured that the proportion of positive real numbers lying in the set $S$ would equal 1 as $x \to \infty$.  However, this conjecture was later disproven by Kaczorowski~\cite{Kaczorowski1995}  conditionally on the Generalised Riemann Hypothesis, by showing that the limit does not exist. 

A breakthrough in this direction of formulating the bias came via the work of Rubinstein and Sarnak \cite{RubinsteinSarnak1994}, who instead considered the logarithmic density $$\delta(S):= \lim  \limits _{X \to \infty}  \frac{1}{\log X} \cdot \int_{t \in S \cap [2, X]} \frac{dt}{t}$$ of $S$; assuming the Generalised Riemann Hypothesis and that the non-negative imaginary parts of zeros of Dirichlet $L$-functions are linearly independent over $\mathbb Q$, they showed that this limit exists and $\delta(S)=0.9959 \ldots$, hence giving a satisfactory explanation of Chebyshev's observations. 
The phenomenon of Chebyshev's bias has since been observed in various other situations; see, for instance, the expository article \cite{Maz08} by Mazur  discussing Chebyshev's bias in the context of elliptic curves, as well as 
the annotated bibliography \cite{Martin} by Martin \textit{et al.} which contains a detailed overview of related literature on the subject.


In \cite{AokiKoyama2023}, Aoki--Koyama introduced a new framework of studying Chebyshev's bias which crucially relies on the Deep Riemann Hypothesis, a conjecture by Kurokawa \textit{et al.} about the convergence of Euler products of $L$-functions on the critical line.
We now briefly explain this conjecture in the setting of automorphic $L$-functions. Let $\pi$ be an irreducible cuspidal automorphic representation of $\mathrm{GL}_n$ over $\mathbb{Q}$ with associated $L$-function $L(s, \pi)$. We can write $L(s, \pi)$ as 
$$
L(s, \pi)=\prod_p \prod_{j=1}^n (1-\alpha_{j, p}p^{-s})^{-1},  
$$
where, for the unramified primes $p$, the $\alpha_{j, p}$'s are the Satake parameters for the local representation $\pi_p$ (see \S \ref{2}).  We let  $\nu(\pi) = m(\mathrm{sym}^{2} \pi)-m(\wedge^{2} \pi) \in \Z$,  where $m(\rho)$ denotes the multiplicity of the trivial representation $\textbf{1}$ in $\rho$.  
We have $\nu(\pi)=-\ord_{s=1} L_2(s, \pi)$, where $L_2(s, \pi)$ is the second moment $L$-function attached to $L(s, \pi)$ defined via 
\begin{equation} \label{sm}
L_2(s, \pi)=\prod_p  \prod_{j=1}^n (1-\alpha_{j, p}^2 p^{-s})^{-1}. 
\end{equation}

\begin{conjecture}[Kaneko--Koyama--Kurokawa \cite{KanekoKoyamaKurokawa2022}]\label{DRH}
Keep the assumptions and notation as above and assume that $L(s, \pi)$ is entire. Let $m = \ord_{s = 1/2} L(s, \pi)$. Then the limit
\begin{equation}\label{limit}
\lim_{x \to \infty} \left((\log x)^{m} \prod_{p \leq x} \prod_{j=1}^n \left(1-\alpha_{j, p} p^{-\frac{1}{2}} \right)^{-1} \right)
\end{equation}
satisfies the following conditions:
\begin{description}
\item[(A)] The limit~\eqref{limit} exists and is non-zero.
\item[(B)] The limit~\eqref{limit} satisfies
\begin{equation*}
\lim_{x \to \infty} \left((\log x)^{m} \prod_{p \leq x} \prod_{j=1}^n \left(1-\alpha_{j, p} p^{-\frac{1}{2}} \right)^{-1} \right)
 = \frac{\sqrt{2}^{ \nu (\pi) }}{e^{m \gamma} m!} \cdot L^{(m)} \left(\frac{1}{2}, \pi \right).
\end{equation*}
\end{description}
\end{conjecture}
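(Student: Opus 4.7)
My plan is to attack this conjecture via a logarithmic analysis of the partial Euler product, split according to the Taylor expansion of $-\log(1-y)$. Setting
$$F(x) := \log \prod_{p \leq x} \prod_{j=1}^n \left(1 - \alpha_{j,p} p^{-1/2}\right)^{-1} = \sum_{k \geq 1} \frac{1}{k} \sum_{p \leq x} \sum_{j=1}^n \alpha_{j,p}^k p^{-k/2},$$
the conjecture amounts to showing that $F(x) + m \log\log x$ converges to the logarithm of the claimed right-hand side. I would treat the contributions from $k \geq 3$, $k = 2$, and $k = 1$ separately, since each involves a distinct analytic issue.

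For $k \geq 3$, the Ramanujan bound $|\alpha_{j,p}| \leq 1$ (or, failing that, the Jacquet--Shalika or Kim--Sarnak bounds) makes the double sum absolutely convergent, so this block contributes a harmless constant. For $k = 2$, the sum is $\tfrac{1}{2}\sum_{p \leq x, j} \alpha_{j,p}^2 p^{-1}$, which is precisely the main term of $\log L_2(s, \pi)$ at $s = 1$. Since $L_2(s, \pi)$ has a pole of order $\nu(\pi)$ at $s = 1$, a Mertens-type theorem for the second-moment $L$-function yields
$$\sum_{p \leq x, j} \alpha_{j,p}^2 p^{-1} = \nu(\pi) \log\log x + C_2 + o(1)$$
with an explicit constant $C_2$; upon exponentiation and recombination with the $k=1$ contribution, this is where the factor $\sqrt{2}^{\,\nu(\pi)}$ of the conjectured limit arises.

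The heart of the problem is the $k = 1$ contribution $S_1(x) := \sum_{p \leq x, j} \alpha_{j,p} p^{-1/2}$, whose asymptotic is governed by the zeros of $L(s, \pi)$ on the critical line. The strategy is to start from the local Taylor expansion $\log L(s, \pi) = m \log(s - 1/2) + \log\bigl(L^{(m)}(1/2, \pi)/m!\bigr) + O(s - 1/2)$, take $s = 1/2 + 1/\log x$, and compare the resulting quantity with the truncated Dirichlet series, absorbing the $k = 2$ divergence at $s = 1/2$ on the Dirichlet-series side. The heuristic prediction is
$$S_1(x) = -\left(m + \tfrac{\nu(\pi)}{2}\right)\log\log x + \log\frac{L^{(m)}(1/2, \pi)}{m!} - m\gamma + o(1),$$
and summing the three blocks and exponentiating produces $(\log x)^{-m}$ multiplied by exactly the claimed constant.

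The main obstacle is making the asymptotic for $S_1(x)$ rigorous. Unconditionally this is far out of reach; even under GRH for $L(s, \pi)$ one must still control delicate oscillations from the non-trivial zeros via the explicit formula, and a Rubinstein--Sarnak style linear-independence hypothesis on the ordinates of the zeros is essentially required to prevent anomalous cancellations from perturbing the limit. I would expect an honest argument to proceed via partial summation against the zero-counting function, working first with the Chebyshev-type sum $\sum_{p \leq x, j} \alpha_{j,p} (\log p)\, p^{-1/2}$ and then inverting; however, pinning down the exact constant $L^{(m)}(1/2, \pi)/m!$ rather than merely the exponent of $\log x$ is precisely the deepest aspect of the Deep Riemann Hypothesis, and as currently understood this step defies a fully rigorous proof.
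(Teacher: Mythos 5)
This statement is a \emph{conjecture}, not a theorem: the paper does not prove it, and indeed cannot, since Conjecture~\ref{DRH} is strictly stronger than the Generalised Riemann Hypothesis. There is therefore no proof in the paper against which to compare your argument. Your write-up does correctly recognise this at the end, so the honest answer is that you have supplied a heuristic motivation, not a proof.

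That said, your heuristic is sound and matches the motivation given in the literature cited by the paper (Goldfeld~\cite{Goldfeld1982}, Conrad~\cite{Conrad2005}, Kaneko--Koyama--Kurokawa~\cite{KanekoKoyamaKurokawa2022}). The decomposition of $\log$ of the partial Euler product into the blocks $k\geq 3$ (absolutely convergent), $k=2$ (governed by the pole order $\nu(\pi)$ of $L_2(s,\pi)$ at $s=1$, producing the $\sqrt{2}^{\,\nu(\pi)}$ and the $\tfrac{\nu(\pi)}{2}\log\log x$ term), and $k=1$ (the genuinely deep part, tied to $\ord_{s=1/2}L(s,\pi)$ and producing the $m\log\log x$, $e^{-m\gamma}$, and $L^{(m)}(1/2,\pi)/m!$) is exactly the right bookkeeping, and is also the decomposition underlying \cite[Theorem 4.1]{Sheth24B}, which is what the paper actually invokes in the proof of Theorem~\ref{mainthm}. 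One small caveat: the $k\geq 3$ block is absolutely convergent for $\GL_2$ holomorphic newforms by Deligne, and more generally by the Jacquet--Shalika/Kim--Sarnak bounds as you say, but in full $\GL_n$ generality one needs at least some bound towards Ramanujan for this to work, which is worth flagging explicitly. The one part of the conjecture your sketch does not address is part (A) for the case $m=0$, i.e.\ that the limit is non-zero; this is automatic from (B) together with $L(1/2,\pi)\neq 0$ when $m=0$, but deserves a sentence. As a sanity check on the overall picture, the paper notes that even GRH only gives the asymptotic as $x\to\infty$ \emph{outside a set of finite logarithmic measure} (\cite{Sheth24B}), which is consistent with your assessment that the $k=1$ sum is the obstruction and that something like linear independence of zero ordinates would be needed for a genuine pointwise limit.
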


The constant $\frac{\sqrt{2}^{ \nu (\pi) }}{e^{m \gamma}}$ in the Euler product asymptotic in Conjecture \ref{DRH} (B) was first discovered by Goldfeld \cite{Goldfeld1982} in the setting of $L$-functions of elliptic curves, while studying the original version of the Birch and Swinnerton--Dyer conjecture. Euler products on the critical line for general $L$-functions were subsequently studied in great depth by Conrad \cite{Conrad2005}, whose results motivated the formulation of Conjecture \ref{DRH}. 

Conjecture \ref{DRH} implies the Generalised Riemann Hypothesis (GRH) for $L(s, \pi)$, and is deeper than GRH in a precise quantitative sense; we refer to \cite{KanekoKoyamaKurokawa2022} for more background on the conjecture. There are both theoretical considerations as well as extensive numerical evidence which support Conjecture \ref{DRH}. For instance, the function-field analogue of the conjecture is known (\cite[Theorem 5.2]{KanekoKoyamaKurokawa2022}) and the second author showed in \cite{Sheth24B} that the GRH implies Conjecture \ref{DRH} as $x \to \infty$ outside a set of finite logarithmic measure.

We now recall the framework of Aoki--Koyama \cite{AokiKoyama2023} for studying Chebyshev's bias using Conjecture \ref{DRH}. 

\begin{definition}[Aoki--Koyama \cite{AokiKoyama2023}] \label{Chebyshev}
Let $(c_p)_p \subseteq \R$ be a sequence over primes $p$  such that
$$
\lim_{x \to \infty} \frac{\#\{p \mid c_p > 0,  p \leq x \}}{\#\{p \mid c_p < 0,  p \leq x \}} = 1.
$$
We say that $(c_p)_p$ has a \textit{Chebyshev's bias towards being positive} (resp. negative) if there exists a positive (resp. negative) constant $C$ such that 
\begin{equation*}
\sum_{p \leqslant x} \frac{c_p}{\sqrt p} \sim C \log \log x.\end{equation*}  On the other hand, we say that $c_p$ is \textit{unbiased} if
\begin{equation*}
\displaystyle{\sum_{p \leq x} \frac{c_p}{\sqrt p } = O(1)}. 
\end{equation*} 
\end{definition}

\begin{example}[{Aoki--Koyama \cite[Example 3.4]{AokiKoyama2023}}]
Let $\chi_4$ denote the non-trivial Dirichlet character mod 4, so $\chi_4(p)=-1$ if $p \equiv 3 \textrm{ mod } 4$ and $\chi_4(p)=1$ if $p \equiv 1 \textrm{ mod } 4$. Assume Conjecture \ref{DRH} for $L(s, \chi_4)$.
Then there is a constant $c$ such that
$$
\sum_{p \leqslant x} \frac{\chi_4(p)}{\sqrt p}= -\frac{1}{2} \log \log x+ c+ o(1). 
$$
Thus, in the sense of Definition \ref{Chebyshev}, there is a Chebyshev's bias towards primes which are 3 mod 4. 
\end{example}

\begin{example}[{Koyama--Kurokawa \cite[Theorem~2]{KK22}}]\label{tau}
Let $\tau(n)$ denote Ramanujan's tau function. Assume Conjecture~\ref{DRH} for $L(s+\frac{11}{2}, \Delta)$. Then there exists a constant $c$ such that
\begin{equation*}
\sum_{p \leqslant x} \frac{\tau(p)}{p^{6}} = \frac{1}{2} \log \log x+c+o(1).
\end{equation*}
Thus, in the sense of Definition \ref{Chebyshev}, the sequence $(\tau(p) p^{-\frac{11}{2}})_p$ has a Chebyshev's bias towards being positive. 
\end{example}

We refer to \cite{AokiKoyama2023, KanekoKoyama2023, Okumura2024} for further examples of Chebsyhev's bias in the framework of Definition \ref{Chebyshev}. The main theorem of this paper is to generalise Example \ref{tau} by establishing the desired asymptotic in Definition \ref{Chebyshev} when $c_p=a_f(p)$ is the Fourier coefficient of an arbitrary cuspidal newform on $\Gamma_0(N)$. As we explain in \S \ref{statement} below, the signs of the Fourier coefficients $a_f(p)$ are equidistributed as we range over all primes $p$ (asymptotically half of them are positive and half of them are negative), so we are indeed in the setting of Definition \ref{Chebyshev} and it is thus natural to investigate a bias towards either sign. 

\subsection{Statement of the main result.} \label{statement}

Let $f(z)=\sum_{n=1}^{\infty}a_f(n)n^{\frac{k-1}{2}}e^{2\pi inz}\in S_k(\Gamma_0(N))$ be a  cusp form (normalised so that $a_f(1)=1$) with trivial nebentypus. The restriction to trivial nebentypus implies that $a_f(n)$ is real for all $n \geq 1$. We recall that if $f$ is also an eigenform for all of the Hecke operators and Atkin--Lehner involutions, then $f$ is called a newform.   In this paper, we assume that $f$ is not a CM form, so there is no imaginary quadratic field $K$ such that for $p\nmid N$, $p$ is inert in $K$ if and only if $a_f(p)=0$.  Deligne's proof of the Weil conjectures 
implies that for each prime $p$, there exists an angle $\theta_p\in[0,\pi]$ such that $a_f(p) = 2\cos\theta_p$. The Sato--Tate conjecture, now proven by Barnet-Lamb, Geraghty, Harris and Taylor \cite{BLGHT11}, 
asserts that if $f$ is non-CM, then the sequence $\{\theta_p\}$ is equidistributed in the interval $[0,\pi]$ with respect to the measure $d\mu_{\mathrm{ST}}:=(2/\pi)\sin^2\theta d\theta$.  Equivalently,  for an interval $I \subseteq [0, \pi]$, one has $\{p\leq x\colon \theta_p\in I\} \sim \mu_{\mathrm{ST}}(I)\pi(x)$ as $x\to\infty$. In particular, the Fourier coefficients $a_f(p)$ satisfy the condition in Definition \ref{Chebyshev} (the set of primes $p$ for which $a_f(p)=0$ has density zero by \cite[Corollaire 2, p.174]{Ser81}). Let $\alpha_p = e^{i \theta_p}$ and $\beta_p = e^{-i \theta_p}$ so that
\begin{equation} \label{ap}
a_f(p) = \alpha_p+\beta_p \text{ and } \alpha_p \beta_p =1. 
\end{equation}

The $L$-function of $f$ is defined to be 
$$
L(s, f):= \sum_{n=1}^{\infty} \frac{a_f(n)}{n^s}= \prod_{p} (1- a_f(p) p^{-s}+p^{-2s})^{-1}  = \prod_{p} (1-\alpha_p p^{-s})^{-1} (1-\beta_p p^{-s})^{-1}, 
$$
which satisfies a functional equation relating $s$ to $1-s$ and admits an analytic continuation to the entire complex plane. 

\begin{theorem} \label{mainthm6}
Let $f \in S_k(\Gamma_0(N))$ be a cuspidal newform. 
Assume Conjecture \ref{DRH} for $L(s, f)$ and let $m(f)=\ord_{s=1/2} L(s, f)$.  Then there exists a constant $c_f$ such that 
$$
\sum_{p \leqslant x} \frac{a_f(p) }{\sqrt p} = \left( \frac{1}{2}- m(f) \right) \log \log x +c_f +o(1). 
$$
%
In particular, in the sense of Definition \ref{Chebyshev}, we conclude that 
\begin{enumerate}
 
\item The sequence $(a_f(p))_p$ has a Chebyshev's bias towards being positive if $m(f)=0$. 

\item The sequence $(a_f(p))_p$ has a Chebyshev's bias towards being negative if $m(f) \geq 1$.  
\end{enumerate}
\end{theorem}

To the best of our knowledge, Theorem \ref{mainthm6} is the first result in the literature which deals with the question of Chebyshev's bias for the signs of Fourier coefficients of arbitrary cuspidal newforms on $\Gamma_0(N)$. Another feature of Theorem \ref{mainthm6}, which is common with the other results in the Aoki--Koyama framework, is that it does not rely on the Linear Independence  Hypothesis;  
we remark that using \cite[Theorem 4.1]{Sheth24B}, the asymptotic in Theorem \ref{mainthm6} holds as $x \to \infty$ outside a set of finite logarithmic measure assuming only GRH, and so we can in fact obtain a statement which captures the bias assuming only GRH.  
Since Theorem \ref{mainthm6} shows that the bias is completely determined by $m(f)$, in \S \ref{4} we explicitly compute $m(f)$ for various examples of Hecke cusp forms for the group $\SL_2(\mathbb Z)$, thereby obtaining completely explicit versions of Theorem \ref{mainthm6} for these examples. 
\subsection*{Acknowledgements}
The authors are grateful to Nobushige Kurokawa and Hidekazu Tanaka for sharing their unpublished  manuscript on Mizumoto's positivity, on which \S \ref{miz} is based. We also thank Adam Harper and Chung-Hang Kwan for helpful remarks on a previous draft of this manuscript, and Brian Conrey and Jeremy Rouse for helpful correspondence. 
\section{Proof of Theorem \ref{mainthm6} } \label{2}

We prove Theorem \ref{mainthm6} by applying a general asymptotic about Chebyshev's bias for Satake parameters of cuspidal automorphic representations, proven in \cite[Theorem 4.1]{Sheth24B}. We begin by introducing the relevant definitions and notations. 

\label{Section2}
Let $\pi=\bigotimes'\pi_v$ be an irreducible cuspidal automorphic representation of $\GL_n(\mathbb A_\mathbb Q)$.  Outside a finite set of places, for each finite place $p$, $\pi_p$ is unramified and we can associate to $\pi_p$ a semisimple conjugacy class $\{A_\pi(p)\}$ in $\GL_n(\mathbb C)$. Such a conjugacy class is parametrised by its eigenvalues $\alpha_{1, p}, \ldots, \alpha_{n, p}$. The local Euler factors $L_p(s, \pi_p)$ are given by 
$$
L_p(s, \pi_p)=\det(1-A_\pi(p) p^{-s})^{-1}= \prod_{j=1}^n (1-\alpha_{j, p}p^{-s})^{-1}. 
$$
At the ramified finite primes, the local factors are best described by the Langlands parameters of $\pi_p$ (see for instance the Appendix in \cite{SarnakRudnick1996}). They are of the form $L_p(s, \pi_p) = P_p(p^{-s})^{-1}$, where $P_p(x)$ is a polynomial of degree at most $n$, and $P_p(0)$ = 1. We will in this case too write the local factors in the form above, with the convention that we now allow some of the $\alpha_{j, p}$'s to be zero. The global $L$-function attached to $\pi$ is given by
\begin{align*}
L(s, \pi)= \prod_p L_p(s, \pi_p) &=\prod_p  \prod_{j=1}^n (1-\alpha_{j, p} p^{-s})^{-1}. 
\end{align*}

Up to finitely many local Euler factors, the Rankin--Selberg, symmetric square and exterior square $L$-functions are given by 
\begin{equation*}
L(\pi \otimes \pi , s) \overset{.}= \prod_p \prod_{i=1}^n \prod_{j=1}^n (1- \alpha_{i, p} \alpha_{j, p} p^{-s})^{-1}, \hspace{3mm} L(\text{Sym}^2 \pi , s) \overset{.}=  \prod_p  \prod_{1 \leq i \leq j \leq n} (1- \alpha_{i, p} \alpha_{j, p} p^{-s})^{-1}
\end{equation*}
and 
\begin{equation}
L(\wedge^2 \pi, s) \overset{.}=  \prod_p  \prod_{1 \leq i< j \leq n} (1- \alpha_{i, p} \alpha_{j, p} p^{-s})^{-1}. 
\end{equation}
Here, $\overset{.}=$ means that equality holds up to multiplication by finitely many Euler factors at the ramified places, whose explicit description we omit; the equalities with $\overset{.}=$ are sufficient for our purposes since we will only be interested in the order of vanishing of the corresponding $L$-functions. From the Euler product expansions above, we see that 
\begin{equation} \label{fact}
L(\pi \otimes \pi, s) \overset{.}= L(\text{Sym}^2\pi, s) L( \wedge^2 \pi, s). 
\end{equation}
On the other hand, the second moment $L$-function $L_2(s, \pi)$ defined in Equation \eqref{sm} can be written as
$$
L_2(s, \pi ) \overset{.}= \frac{L(\text{Sym}^2 \pi , s)}{L(\wedge^2 \pi, s)}.  
$$

As explained in \cite[Example 1]{Devin2020}, there exists an open subset $U \supseteq \{s \in \mathbb C: \Re(s) \geq 1\}$ such that $L_2(s, \pi)$ can be continued to a meromorphic function on $U$; thus $\ord_{s=1} L_2(s, \pi)$ is well-defined.

\begin{theorem} [{Chebyshev's bias for Satake parameters \cite[Theorem 4.1]{Sheth24B}}] \label{satakebias}
Let $\pi$ be an irreducible cuspidal automorphic representation of $\GL_n(\mathbb A_\mathbb Q)$ such that $L(s, \pi)$ is entire, let $m= \ord_{s=1/2} L(s, \pi)$ and let $R(\pi)= \ord_{s=1} L_2(s, \pi)$. Assume Conjecture \ref{DRH} for $L(s, \pi)$. Then there exists a constant $c_\pi$ such that 
$$
\Re\left(\sum_{p \leqslant x} \frac{\alpha_{1, p}+ \cdots +\alpha_{n, p}}{\sqrt p} \right)= \left( \frac{R(\pi)}{2}-m \right) \log \log x+ c_\pi+ o(1). 
$$
\end{theorem}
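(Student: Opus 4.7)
The plan is to extract the stated asymptotic from Conjecture \ref{DRH} by taking logarithms of the partial Euler product and carefully isolating the first-order contribution. First I would write Conjecture \ref{DRH}(B) logarithmically as
\begin{equation*}
\log \prod_{p \leq x} \prod_{j=1}^n \left(1-\alpha_{j,p} p^{-1/2}\right)^{-1} = -m \log \log x + C + o(1)
\end{equation*}
for some constant $C \in \C$, and then expand each local factor using $-\log(1-z) = \sum_{k \geq 1} z^k/k$. Interchanging the order of summation would split the left-hand side into the target sum $\Sigma_1(x) := \sum_{p \leq x} \sum_{j=1}^n \alpha_{j,p}/\sqrt p$ (the $k=1$ contribution), a second-order piece $\tfrac{1}{2}\Sigma_2(x)$ where $\Sigma_2(x) := \sum_{p \leq x} \sum_{j=1}^n \alpha_{j,p}^2/p$ (the $k=2$ contribution), and a higher-order tail $T(x) := \sum_{k \geq 3} \frac{1}{k} \sum_{p \leq x} \sum_{j=1}^n \alpha_{j,p}^k p^{-k/2}$.

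The first subgoal would be to show that $T(x)$ converges to a finite limit as $x \to \infty$. Invoking the Luo--Rudnick--Sarnak bound $|\alpha_{j,p}| \leq p^{1/2 - 1/(n^2+1)}$ at the unramified primes would dominate each summand by $p^{-k/(n^2+1)}$, which is absolutely summable in $p$ once $k > n^2+1$; the finitely many smaller values of $k$ would be handled either by invoking Ramanujan when it is known (e.g., for the holomorphic newform setting of Theorem \ref{mainthm}, where $|\alpha_{j,p}|=1$) or else by relating the partial sum $\sum_{p \leq x} \sum_j \alpha_{j,p}^k p^{-k/2}$ to the value at $s=1$ of $\log L^{(k)}(s, \pi)$ for an appropriate higher-moment $L$-function and exploiting its holomorphy near $s=1$.

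The heart of the argument would be a Mertens-type analysis of $\Sigma_2(x)$. From the Euler factorisation of $L_2(s, \pi)$ one has
\begin{equation*}
\log L_2(s, \pi) = \sum_p \sum_{j=1}^n \frac{\alpha_{j,p}^2}{p^s} + H(s),
\end{equation*}
where $H(s)$ is holomorphic on $\Re s > 1/2$. Combining the hypothesis $\ord_{s=1} L_2(s, \pi) = R(\pi)$ with the meromorphic continuation of $L_2(s, \pi)$ to a neighborhood of $\Re s = 1$ provided by \cite{Devin2020} and the factorisation $L_2(s, \pi) \overset{.}= L(\mathrm{Sym}^2 \pi, s)/L(\wedge^2 \pi, s)$, one would obtain $\log L_2(s, \pi) = R(\pi) \log(s-1) + O(1)$ as $s \to 1^+$. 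A Tauberian argument of Mertens type, coupled with the standard nonvanishing of $L(\mathrm{Sym}^2 \pi, s)$ and $L(\wedge^2 \pi, s)$ on the line $\Re s = 1$, would then yield $\Sigma_2(x) = -R(\pi) \log \log x + c' + o(1)$.

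Combining the three pieces and taking real parts,
\begin{equation*}
\Re \Sigma_1(x) = -m \log \log x - \tfrac{1}{2} \Re \Sigma_2(x) - \Re T(x) + \Re C + o(1) = \left(\frac{R(\pi)}{2}-m\right) \log \log x + c_\pi + o(1),
\end{equation*}
which is the desired asymptotic. I anticipate that the main obstacle is the Mertens-type analysis of $\Sigma_2(x)$ at the required precision $o(1)$: meromorphic continuation alone is insufficient, and one must invoke nonvanishing of $L_2(s, \pi)$ on the line $\Re s = 1$ away from $s = 1$, which in turn rests on the analytic continuation and nonvanishing properties of the symmetric and exterior square $L$-functions attached to $\pi$.
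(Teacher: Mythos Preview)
Your proposal is correct and follows essentially the same route as the paper's proof: take logarithms in Conjecture~\ref{DRH}, apply a Mertens-type theorem to control the $k=2$ piece (this is precisely the ``generalised version of Mertens' theorem'' cited as \cite[Lemma~3.6]{Sheth24B}), absorb the $k\geq 3$ tail into the constant, and take real parts. You have also correctly identified the main technical obstacle, namely that obtaining the Mertens asymptotic for $\Sigma_2(x)$ with error $o(1)$ requires non-vanishing of $L_2(s,\pi)$ on $\Re s = 1$.
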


\begin{proof}
The asymptotic in the theorem follows by taking logarithms in Conjecture \ref{DRH}, applying a generalised version of Mertens' theorem (\cite[Lemma 3.6]{Sheth24B}) and then taking real parts; we refer to the proof of \cite[Theorem 4.1]{Sheth24B} for more details. 
\end{proof}

\begin{proposition} \label{secondmoment}
Let $f \in S_k(\Gamma_0(N))$ be a cuspidal newform. Then we have that 
$
R(\pi_f) = 1, 
$
where $\pi_f$ denotes the automorphic representation generated by $f$. 
\end{proposition}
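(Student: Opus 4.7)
The plan is to use the factorisation identities recalled in the excerpt, which give
\[
L_2(s,\pi_f) \overset{.}= \frac{L(\mathrm{Sym}^2\pi_f,s)}{L(\wedge^2\pi_f,s)}, \qquad L(\pi_f\otimes\pi_f,s) \overset{.}= L(\mathrm{Sym}^2\pi_f,s)\,L(\wedge^2\pi_f,s),
\]
and to read off the order of $L_2(s,\pi_f)$ at $s=1$ from the known pole orders of the Rankin--Selberg and exterior square $L$-functions. Since the symbol $\overset{.}=$ only hides finitely many Euler factors at ramified primes, and each such factor is a nonzero polynomial in $p^{-s}$ evaluated at a point with $\mathrm{Re}(s)=1$, these factors contribute nothing to the order of vanishing at $s=1$, so I may compute with the Euler products as written.

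The first step is to identify the exterior square. For $\pi_f$ on $\GL_2$, the representation $\wedge^2\pi_f$ is one-dimensional and equals the central character of $\pi_f$. Because $f$ has trivial nebentypus, this central character is trivial, so
\[
L(\wedge^2\pi_f,s) \overset{.}= \zeta(s),
\]
which has a simple pole at $s=1$; hence $\ord_{s=1}L(\wedge^2\pi_f,s) = -1$. The second step is to invoke Jacquet--Shalika: the Rankin--Selberg $L$-function $L(\pi_f\otimes\pi_f,s) = L(\pi_f\otimes\widetilde{\pi_f},s)$ (using that $\pi_f$ is self-dual because $a_f(n)\in\R$) is holomorphic on $\mathrm{Re}(s)\geq 1$ except for a simple pole at $s=1$, so $\ord_{s=1}L(\pi_f\otimes\pi_f,s) = -1$.

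Combining these via the factorisation $L(\pi_f\otimes\pi_f,s)\overset{.}= L(\mathrm{Sym}^2\pi_f,s)L(\wedge^2\pi_f,s)$ gives
\[
\ord_{s=1} L(\mathrm{Sym}^2\pi_f,s) \;=\; \ord_{s=1} L(\pi_f\otimes\pi_f,s) - \ord_{s=1} L(\wedge^2\pi_f,s) \;=\; -1 - (-1) \;=\; 0,
\]
i.e.\ the symmetric square $L$-function is both holomorphic and nonvanishing at $s=1$. Substituting into the quotient expression for $L_2$ then yields
\[
R(\pi_f) \;=\; \ord_{s=1} L_2(s,\pi_f) \;=\; \ord_{s=1} L(\mathrm{Sym}^2\pi_f,s) - \ord_{s=1} L(\wedge^2\pi_f,s) \;=\; 0 - (-1) \;=\; 1.
\]

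The main subtlety is the justification that $L(\pi_f\otimes\pi_f,s)$ has exactly a simple pole at $s=1$; this is where the non-CM hypothesis on $f$ is implicitly relevant (to ensure $\pi_f$ is a cuspidal automorphic representation of $\GL_2$ to which the Jacquet--Shalika Rankin--Selberg machinery applies and gives cuspidality of $\pi_f$ against itself). The other potentially subtle point — non-vanishing of $L(\mathrm{Sym}^2\pi_f,s)$ at $s=1$, which would otherwise require the Gelbart--Jacquet lift together with non-vanishing at the edge of the critical strip — is sidestepped here because the factorisation argument delivers both holomorphy and non-vanishing at $s=1$ simultaneously.
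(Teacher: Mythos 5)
Your proof is correct and follows essentially the same route as the paper: factor the Rankin--Selberg $L$-function into symmetric and exterior squares, identify the exterior square with $\zeta(s)$ via the trivial central character, use the simple pole of the Rankin--Selberg $L$-function at $s=1$, and subtract orders of vanishing. One small difference: the paper additionally cites Shahidi's theorem for nonvanishing of $L(\mathrm{Sym}^2\pi_f,s)$ on $\mathrm{Re}(s)=1$, whereas you correctly note that the factorisation already forces $\ord_{s=1}L(\mathrm{Sym}^2\pi_f,s)=0$ without that extra input. One minor inaccuracy in your closing aside: the non-CM hypothesis is not what guarantees the Jacquet--Shalika simple pole --- a CM newform still generates an honest cuspidal automorphic representation of $\GL_2$ (it is an automorphic induction from $\GL_1$ over an imaginary quadratic field), so the Rankin--Selberg machinery applies to it just as well; the non-CM assumption enters the paper elsewhere (for Sato--Tate equidistribution of the signs), not in this proposition.
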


\begin{proof}
By \cite[Theorem 1.1]{Sha97} we have that $L(\text{Sym}^2 \pi_f, s)$  is non-vanishing on the line $\textrm{Re}(s)=1$. Moreover, $L( \pi_f \otimes  \pi_f, s)$ has a simple pole at $s=1$ since $\pi_f$ is self-dual (see for instance the proof of \cite[Theorem 3.4]{Devin2020}). On the other hand, we have by Equation \eqref{ap} that $L(\wedge^2 \pi_f, s) \overset{.}=\zeta(s)$. It follows from Equation \eqref{fact} that $ \ord_{s=1} L(\text{Sym}^2  \pi_f, s)=0$ and so
$\displaystyle{
R(\pi_f)= \ord_{s=1} L_2(s, \pi_f) = \ord_{s=1} L(\text{Sym}^2  \pi_f, s)-  \ord_{s=1} L(\wedge^2  \pi_f, s) = 0-(-1)=1.}$ 
\end{proof}

\begin{remark}
If $\pi$ is an arbitrary self-dual irreducible cuspidal automorphic representation, then  $R(\pi)$ can only equal $\pm 1$; cf. the proof of \cite[Theorem 3.4]{Devin2020}. 
\end{remark}

\begin{proof}[Proof of Theorem \ref{mainthm6}]
This follows by combining Equation \eqref{ap}, Theorem \ref{satakebias} and Proposition \ref{secondmoment}.
\end{proof}

\section{Explicit examples} \label{4}

Since Theorem \ref{mainthm6} shows that bias is completely determined by $m(f)$, in this section we explicitly compute these quantities for various examples of cuspidal Hecke eigenforms for the group $\SL_2(\mathbb Z)$.  

\subsection{Vanishing caused by root number considerations} 

The completed $L$-function of $f$ is defined by  
$
\Lambda(f, s) =   (2 \pi)^{-s-\frac{k-1}{2}}  \Gamma \left(s+ \frac{k-1}{2} \right)  L(s, f) 
$
and satisfies the functional equation 
\begin{equation} \label{fe}
\Lambda(f, s) = (-1)^{\frac{k}{2}} \Lambda(f, 1-s). 
\end{equation}

We note that if $k \equiv 2 \textrm{ mod } 4$, then it follows from Equation \eqref{fe} that $\Lambda \left(f, \frac{1}{2} \right)=0$; thus, $L\left(\frac{1}{2}, f\right)=0$ as well and $m(f) \geq 1$. We thus obtain the following corollary. 

\begin{corollary}
Assume Conjecture \ref{DRH}. If $f$ is a cuspidal Hecke eigenform for $\SL_2(\mathbb Z)$ of weight $k \equiv 2 \textrm{ mod } 4$, then the sequence $(a_f(p))_p$ has a Chebyshev's bias towards being negative. 
\end{corollary}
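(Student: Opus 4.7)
The plan is to observe that when $k\equiv 2\pmod 4$, the sign of the functional equation~\eqref{fe} is forced to be $-1$, which in turn forces a central vanishing of $L(s,f)$, whence $m(f)\geq 1$; the result then follows from part (2) of Theorem~\ref{mainthm}.

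In more detail, first I would write $k=4\ell+2$ so that $(-1)^{k/2}=(-1)^{2\ell+1}=-1$. The functional equation~\eqref{fe} then becomes $\Lambda(f,s)=-\Lambda(f,1-s)$; specializing at the symmetric point $s=1/2$ gives $\Lambda(f,1/2)=-\Lambda(f,1/2)$, so $\Lambda(f,1/2)=0$.

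Next I would argue that the vanishing of $\Lambda(f,1/2)$ actually forces the vanishing of $L(1/2,f)$. The archimedean factor at $s=1/2$ is
\[
(2\pi)^{-k/2}\,\Gamma\!\left(\tfrac{k}{2}\right),
\]
which is a product of a positive real number and a value of $\Gamma$ at a positive integer (since $k\geq 2$ is even), hence it is nonzero. Therefore $L(1/2,f)=0$, which means $m(f)=\ord_{s=1/2}L(s,f)\geq 1$.

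Finally, since we assume Conjecture~\ref{DRH} for $L(s,f)$, Theorem~\ref{mainthm}(2) applies and yields a Chebyshev's bias of $(a_f(p))_p$ towards being negative. There is no serious obstacle here: the only things to check are the parity of $k/2$ (immediate) and the non-vanishing of the archimedean factor at $s=1/2$ (immediate from $\Gamma(k/2)\neq 0$ for integer $k\geq 2$).
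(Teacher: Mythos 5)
Your proof is correct and follows essentially the same route as the paper: deduce $\Lambda(f,1/2)=0$ from the sign $(-1)^{k/2}=-1$ in the functional equation, pass to $L(1/2,f)=0$ via the non-vanishing archimedean factor, conclude $m(f)\geq 1$, and invoke Theorem~\ref{mainthm}(2). You merely make explicit the (easy but real) check that $(2\pi)^{-k/2}\Gamma(k/2)\neq 0$, which the paper leaves implicit.
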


On the other hand, when $k \equiv 0 \textrm{ mod } 4$, it has been conjectured that $m(f)=0$; see, for instance, the work of Conrey--Farmer \cite{CF99} which introduces this conjecture and provides numerical evidence.  As another piece of evidence,  Luo \cite{Luo15} showed using the method of mollifiers that a positive proportion of Hecke eigenforms have non-vanishing central $L$-value as $k \to \infty$ with $k \equiv 0 \textrm{ mod } 4$. 
\\
It is natural to investigate $m(f)$ when $k \equiv 2 \textrm{ mod } 4$ as well. To the best of our knowledge, a precise conjecture for this case has not been recorded in the literature, although one expects that $m(f)=1$ by the minimalist philosophy that
$L$-functions do not have extra vanishing at the central point unless there is an underlying deeper geometric reason. Moreover,  Liu \cite{Liu18} showed that a positive proportion of Hecke eigenforms satisfy $L'(\frac{1}{2}, f) \neq 0$ as $k \to \infty$ with $k \equiv 2 \textrm{ mod } 4$; an explicit proportion was subsequently calculated by Jobrack \cite{Job20}. In view of the above, we make the following conjecture. 

\begin{conjecture}
Let $f$ be a cuspidal Hecke eigenform for the group $\SL_2(\mathbb Z)$ of weight $k$. If $k \equiv 2 \mod 4$,  then $m(f)=1$. 
\end{conjecture}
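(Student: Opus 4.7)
The plan is to split the claim $m(f)=1$ into the two inequalities $m(f) \geq 1$ and $m(f) \leq 1$. The lower bound is immediate from the functional equation~\eqref{fe}: when $k \equiv 2 \pmod{4}$ the root number $(-1)^{k/2}$ equals $-1$, forcing $\Lambda(f,\tfrac{1}{2})=0$ and, since the gamma factor $\Gamma(\tfrac{k}{2})$ is non-zero, also $L(\tfrac{1}{2},f)=0$. Thus all the content of the conjecture lies in the complementary non-vanishing statement $L'(\tfrac{1}{2},f) \neq 0$, and my plan is directed at that.

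\textbf{Small weights.} For $k \equiv 2 \pmod{4}$ with $k \leq 26$, the space $S_k(\SL_2(\Z))$ has dimension at most one, and is in fact one-dimensional exactly for $k = 18,22,26$. In each of these three cases there is a unique normalised Hecke eigenform $f$, and I would compute $L'(\tfrac{1}{2},f)$ to arbitrary precision via Dokchitser's smoothed approximate functional equation, together with a rigorous bound on the tail of the series; non-vanishing then follows from interval arithmetic. The same approach handles any finite list of larger weights one Galois-conjugacy class of eigenforms at a time, so any serious proof of the full conjecture needs an argument that kicks in for sufficiently large $k$.

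\textbf{Large weights.} The natural starting point is the mollifier machinery of Liu and Jobrack cited in the excerpt, which establishes a \emph{positive proportion} of non-vanishing of $L'(\tfrac{1}{2},f)$ as $k \to \infty$ along $k \equiv 2 \pmod{4}$. To upgrade this to every individual eigenform, one route is to invoke the higher-weight Gross--Zagier formula due to S.\ Zhang, which expresses $L'(\tfrac{1}{2},f)$ (after a suitable imaginary-quadratic twist) as a non-zero multiple of the Beilinson--Bloch height of a Heegner cycle on the relevant Kuga--Sato variety; non-triviality of the cycle would then imply the desired analytic non-vanishing. Alternatively, a refined study of the mollified second moment---seeking pointwise lower bounds rather than averaged ones---might in principle push the positive proportion to $100\%$.

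\textbf{Main obstacle.} The central difficulty is that no presently available technique produces individual non-vanishing of $L'(\tfrac{1}{2},f)$ for every single Hecke eigenform of a given weight. Mollifier methods are intrinsically statistical and lose a positive proportion to Cauchy--Schwarz; the Gross--Zagier route trades the analytic problem for non-triviality of Heegner cycles, which in this generality is largely open and whose known instances often themselves rely on $L$-value techniques, risking circularity. For these reasons the conjecture as stated appears strictly harder than any non-vanishing theorem currently in the literature, and only the small-weight cases look provable with present tools.
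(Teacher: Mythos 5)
The statement you are addressing is formulated in the paper as a \emph{conjecture}, not a theorem, so there is no proof in the paper to compare against; your verdict that the full claim lies beyond current technology matches the authors' own stance. You correctly isolate the trivial lower bound $m(f)\geq 1$ (the root number $(-1)^{k/2}$ equals $-1$, forcing $L(\tfrac12,f)=0$) from the genuinely hard non-vanishing $L'(\tfrac12,f)\neq 0$, and your assessment of the large-weight obstruction is accurate: mollifier methods (Liu, Jobrack) only give a positive proportion, and the Gross--Zagier route trades the analytic problem for non-triviality of Heegner cycles, so no current technique delivers individual non-vanishing for every eigenform.

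Where your plan diverges from what the paper actually does is in the small-weight cases $k\in\{18,22,26\}$. You propose a Dokchitser computation certified by interval arithmetic, but the paper establishes $m(\Delta_k)=1$ unconditionally and exactly in Theorem~\ref{superpositivity}, with no numerics. Unfolding the Mellin integral and applying the automorphy of $\Delta_k$ gives
\[
\Lambda(\Delta_k,s)=\int_1^{\infty}\Delta_k(iy)\left((-1)^{k/2}\,y^{\frac{k-1}{2}-s}+y^{s+\frac{k-3}{2}}\right)dy,
\]
so that
\[
\Lambda^{(m)}\!\left(\Delta_k,\tfrac12\right)=\left(1+(-1)^{\frac{k}{2}+m}\right)\int_1^{\infty}\Delta_k(iy)(\log y)^m\,y^{\frac{k-2}{2}}\,dy.
\]
Since $\Delta_k(iy)=\Delta(iy)\,E_{k-12}(iy)>0$ for $y>1$ (Mizumoto's positivity, from the product expansion of $\Delta$ together with Proposition~\ref{miz1}), the integral is strictly positive; for $k\equiv 2\bmod 4$ the prefactor vanishes at $m=0$ and equals $2$ at $m=1$, giving $\Lambda(\Delta_k,\tfrac12)=0$ and $\Lambda'(\Delta_k,\tfrac12)>0$, hence $m(\Delta_k)=1$, and simultaneously super-positivity of all higher derivatives. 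This positivity trick is both cleaner and stronger than a certified numerical evaluation: it is exact, it covers all derivatives at once, and it illustrates a reusable principle --- whenever the eigenform factors as a product of a cusp form and an Eisenstein series both positive on the imaginary axis above $i$, individual non-vanishing of $L'(\tfrac12,f)$ comes for free.
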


\begin{remark}
The above discussion yields the following interpretation of Theorem \ref{mainthm6}: the Fourier coefficients $a_f(p)$ have a bias towards being positive (resp. negative) when the root number of  $L(s, f)$ is positive (resp. negative). 
\end{remark}

\subsection{Super-positivity}

In their celebrated work on higher Gross--Zagier formulae over function fields, Yun--Zhang introduced the notion of super-positivity for self-dual automorphic $L$-functions (see \cite[Appendix B]{YZ17}) which asserts that the value of any derivative of the corresponding completed $L$-function at the central point must be non-negative. In the case of modular forms, this property is formulated as follows. 

\begin{definition}[Super-positivity] \label{sp}
Let $f$ be a cuspidal Hecke eigenform for the group $\SL_2(\mathbb Z)$ with associated completed $L$-function $\Lambda(s, f)$. We say that $\Lambda(s, f)$ satisfies the super-positivity property if $\Lambda^{(m)} \left( \frac{1}{2}, f \right) \geq 0$ for all $m \geq 0$. 
\end{definition}

 Yun--Zhang show that GRH implies the super-positivity property (\cite[Theorem B.2]{YZ17}). In the next subsection, we verify super-positivity \textit{unconditionally} for some explicit examples of modular forms (including Ramanujan's $\Delta$ function). Our argument also computes $m(f)$ in these examples, and we thus obtain a completely explicit version of Theorem \ref{mainthm6} for these examples.

\subsection{Mizumoto's positivity} \label{miz}
To study Definition \ref{sp}, we introduce another positivity property which was studied by Mizumoto in \cite{Miz99}. 

\begin{definition}[Mizumoto's positivity]
We say that a Hecke eigenform $f \in \textrm{M}_k(\SL_2(\mathbb Z))$ satisfies Mizumoto's positivity if $f(it) >0$ for all $t \in \mathbb R$ with $t>1$. 
\end{definition}

\begin{example} \label{miz2}
It follows from the product expansion
$\displaystyle{
\Delta(z)=q\prod_{n=1}^{\infty} (1-q^n)^{24}}$, that Ramanujan's $\Delta$-function satisfies Mizumoto's positivity. 
\end{example}

For $k \geq 1$, we let $$E_{2k}(z)= 1-\frac{4k}{B_{2k}} \sum_{n=1}^{\infty} \sigma_{2k-1}(n) q^n$$ be the Eisenstein series of weight $2k$ for the group $\SL_2(\mathbb Z)$. 

\begin{proposition} \label{miz1}
For all $k \geq 2$, $E_{2k}(z)$ satisfies Mizumoto's positivity. 
\end{proposition}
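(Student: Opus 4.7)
The plan is to reduce the inequality $E_{2k}(it) > 0$ for $t > 1$ to a non-vanishing statement for $E_{2k}$, and then to invoke the classical theorem of F.~K.~C.~Rankin and H.~P.~F.~Swinnerton-Dyer on the location of zeros of Eisenstein series in the standard fundamental domain for $\SL_2(\mathbb{Z})$.

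First, because the Fourier coefficients of $E_{2k}$ are rational and in particular real, the map $t \mapsto E_{2k}(it)$ is a continuous, real-valued function of $t$ on $(0,\infty)$. Reading off the $q$-expansion at $q = e^{-2\pi t}$ yields $E_{2k}(it) \to 1$ as $t \to \infty$, so the function is positive for all sufficiently large $t$.

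Second, I would invoke the Rankin--Swinnerton-Dyer theorem: for every $k \geq 2$, all zeros of $E_{2k}$ in the standard fundamental domain $\mathcal{F} = \{z \in \mathbb{H} : |z| \geq 1,\ |\Re(z)| \leq 1/2\}$ lie on the arc $\{e^{i\theta} : \pi/2 \leq \theta \leq 2\pi/3\}$. For $t > 1$, the point $z = it$ has $|z| = t > 1$ and $\Re(z)=0$, so $z$ lies in the strict interior of $\mathcal{F}$ and is in particular off of this arc. Consequently $E_{2k}(it) \neq 0$ for every $t > 1$.

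Combining these two observations, the continuous real-valued function $t \mapsto E_{2k}(it)$ is nowhere zero on the connected interval $(1, \infty)$ and tends to the positive limit $1$ at infinity; by the intermediate value theorem it must therefore be strictly positive on all of $(1, \infty)$, which is precisely Mizumoto's positivity. The only non-routine ingredient is the Rankin--Swinnerton-Dyer theorem; once it is available, the remainder of the argument is a routine appeal to continuity and connectedness, so I do not anticipate any further obstacle.
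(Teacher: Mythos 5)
Your proof is correct, but it takes a genuinely different route from the paper's. You reduce the positivity of $E_{2k}(it)$ on $(1,\infty)$ to a non-vanishing statement and then invoke the Rankin--Swinnerton-Dyer theorem (all zeros of $E_{2k}$ in the standard fundamental domain lie on the arc $\lvert z\rvert=1$, $\pi/2 \le \arg z \le 2\pi/3$), finishing with continuity, the limit $E_{2k}(it)\to 1$, and connectedness of $(1,\infty)$. The paper instead argues elementarily from the $q$-expansion: the sign of $B_{2k}$ splits the analysis into two cases. When $2k\equiv 0\pmod 4$, $B_{2k}<0$, so all Fourier coefficients of $E_{2k}$ are nonnegative and $E_{2k}(it)\ge 1>0$ immediately. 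When $2k\equiv 2\pmod 4$, $B_{2k}>0$, so the non-constant coefficients are all negative; hence $t\mapsto E_{2k}(it)$ is strictly increasing on $(0,\infty)$, and since the functional equation $E_{2k}(-1/z)=z^{2k}E_{2k}(z)$ forces $E_{2k}(i)=(-1)^k E_{2k}(i)=0$ in this case, monotonicity gives $E_{2k}(it)>E_{2k}(i)=0$ for $t>1$. Your approach is cleaner conceptually but imports a substantially heavier theorem; the paper's is self-contained, using only Bernoulli sign data and the modular transformation at $z=i$, which is more in keeping with the elementary flavor of this section. Both are valid.
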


\begin{proof}
We recall that $E_{2k}$ satisfies the automorphy relation
$
E_{2k} \left(-\frac{1}{i} \right ) = (-1)^{k} E_{2k}(i). 
$
Thus if $k=2m+1$ for some $m \geq 1$, then $E_{2k}(i)=E_{4m+2}(i)=0$. Since $B_{4m+2}>0$, it follows that  
$
E_{2k}(it)>E_{2k}(i)=0 
$
for all $t>1$. If $k=2m$ for some $m > 1$, then since $B_{4m}<0$ we have 
$
E_{2k}(it)=E_{4m}(it)= 1-\frac{8m}{B_{4m}} \sum_{n=1}^{\infty} \sigma_{4m-1}(n) e^{- 2 \pi  n t}>0 
$ 
for all $t >1$. 
\end{proof}

For  $k \in \{12, 16, 20, 18, 22, 26\}$, we define 
$$
\Delta_k(z)= \Delta(z) \cdot E_{k-12}(z)=\sum_{n=1}^{\infty} \tau_k(n) q^n, 
$$
where we use the convention that $E_0=1$. Since $\dim_{\mathbb C}S_k(\SL_2(\mathbb Z))=1$ for $k \in \{12, 16, 20, 18, 22, 26\}$, it follows that $\Delta_k$ is a Hecke eigenform. By the main result of \cite{Duke1999} and \cite{Gha00, Gha02}, $\Delta_k$ for $k \in \{12, 16, 20, 18, 22, 26\}$ are the only eigenforms for $\SL_2(\mathbb Z)$ that are the product of two eigenforms. 

\begin{corollary} \label{deltak}
For  $k \in \{12, 16, 20, 18, 22, 26\}$, $\Delta_k$ satisfies Mizumoto's positivity. 
\end{corollary}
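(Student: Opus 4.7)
The plan is to observe that Mizumoto's positivity is preserved under products, so the result follows directly from combining Example \ref{miz2} (which handles the $\Delta$ factor) with Proposition \ref{miz1} (which handles the Eisenstein series factor).

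First I would dispose of the edge case $k=12$: here $\Delta_{12} = \Delta \cdot E_0 = \Delta$ by our convention, and Example \ref{miz2} already gives $\Delta(it) > 0$ for $t > 1$. For the remaining weights $k \in \{16, 18, 20, 22, 26\}$, I would write $\Delta_k(it) = \Delta(it) \cdot E_{k-12}(it)$ and argue that each factor is strictly positive for $t > 1$: the first factor is positive by Example \ref{miz2}, and the second factor is positive by Proposition \ref{miz1}, provided the weight $k-12 \in \{4, 6, 8, 10, 14\}$ is of the form $2\ell$ with $\ell \geq 2$. A quick check confirms that each of $4, 6, 8, 10, 14$ is indeed even and at least $4$, so Proposition \ref{miz1} applies in every case. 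The product of two positive real numbers being positive then yields $\Delta_k(it) > 0$ for all $t > 1$.

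The only potential subtlety is verifying that the list $\{4, 6, 8, 10, 14\}$ of weights of Eisenstein factors really is covered by the hypothesis $k \geq 2$ of Proposition \ref{miz1}; since this is a routine numerical check and the rest of the argument is formal, I do not anticipate any real obstacle. Notably, the weight $12$ is skipped (as one would expect, since $\Delta \cdot E_{12}$ is not on the Ghate--Duke list of product eigenforms), so no new case intervenes.
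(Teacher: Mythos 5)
Your proof is correct and follows the same route as the paper, which simply says ``This follows by combining Example~\ref{miz2} and Proposition~\ref{miz1}.'' You have merely spelled out what that combination entails: positivity of a product of positives, the $k=12$ edge case via $E_0=1$, and a check that the Eisenstein weights $\{4,6,8,10,14\}$ fall within the scope of Proposition~\ref{miz1}.
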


\begin{proof}
This follows by combining Example \ref{miz2} and Proposition \ref{miz1}. 
\end{proof}

\begin{theorem} \label{superpositivity}
The following hold true. 

\begin{enumerate}

\item For $k \in \{12, 16, 20, 18, 22, 26\}$, $\Lambda(\Delta_k, s)$ satisfies the super-positivity property.  

\item If $k=12, 16$ or $20$, then $m(\Delta_k)=0$. 

\item If $k=18, 22$ or $26$, then $m(\Delta_k)=1$.  

\end{enumerate}

\end{theorem}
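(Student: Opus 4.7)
The plan is to represent $\Lambda(\Delta_k, s)$ as a Mellin integral and read off its Taylor expansion at $s = 1/2$ directly in terms of the pointwise values of $\Delta_k(it)$ on the imaginary axis, where positivity is already in hand from Corollary \ref{deltak}. Starting from the normalised Fourier expansion, a standard calculation gives
\begin{equation*}
\Lambda(\Delta_k, s) \;=\; \int_0^\infty \Delta_k(it)\, t^{\,s + \frac{k-1}{2} - 1}\, dt,
\end{equation*}
which converges absolutely for every $s \in \mathbb{C}$ since $\Delta_k$ is a cusp form. I would then split the integral at $t = 1$ and apply the level-$1$ transformation $\Delta_k(i/t) = (-1)^{k/2} t^k \Delta_k(it)$ to the piece over $(0, 1)$ to obtain the symmetrised formula
\begin{equation*}
\Lambda\!\left(\Delta_k,\, \tfrac{1}{2} + \sigma\right) \;=\; \int_1^\infty \Delta_k(it)\, t^{\,\frac{k}{2} - 1}\, \bigl[\, t^{\sigma} + (-1)^{k/2} t^{-\sigma}\, \bigr]\, dt.
\end{equation*}

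Expanding the bracket in powers of $\sigma$ (as $2\cosh(\sigma \log t)$ or $2\sinh(\sigma \log t)$ according to the sign $(-1)^{k/2}$), I would then read off the Taylor coefficients at the central point. For $k \equiv 0 \pmod 4$, i.e. $k \in \{12, 16, 20\}$, only the even derivatives of $\Lambda$ survive and
\begin{equation*}
\Lambda^{(2m)}\!\left(\tfrac{1}{2}, \Delta_k\right) \;=\; 2 \int_1^\infty \Delta_k(it)\, t^{\,\frac{k}{2}-1} (\log t)^{2m}\, dt, \qquad \Lambda^{(2m+1)}\!\left(\tfrac{1}{2}, \Delta_k\right) = 0;
\end{equation*}
for $k \equiv 2 \pmod 4$, i.e. $k \in \{18, 22, 26\}$, the parities swap and
\begin{equation*}
\Lambda^{(2m+1)}\!\left(\tfrac{1}{2}, \Delta_k\right) \;=\; 2 \int_1^\infty \Delta_k(it)\, t^{\,\frac{k}{2}-1} (\log t)^{2m+1}\, dt, \qquad \Lambda^{(2m)}\!\left(\tfrac{1}{2}, \Delta_k\right) = 0.
\end{equation*}
In either case Corollary \ref{deltak} guarantees $\Delta_k(it) > 0$ for every $t > 1$, so each non-zero integrand is strictly positive on $(1, \infty)$ and the corresponding derivative of $\Lambda$ at $1/2$ is strictly positive. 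This proves the super-positivity statement in part (1) \emph{unconditionally}, with no appeal to GRH.

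For parts (2) and (3), I would use the fact that the gamma and $(2\pi)$-factors are holomorphic and nonvanishing at $s = 1/2$, so $m(\Delta_k) = \ord_{s = 1/2} \Lambda(\Delta_k, s)$. For $k \in \{12, 16, 20\}$ the formula above with $m = 0$ gives $\Lambda(1/2, \Delta_k) > 0$, hence $m(\Delta_k) = 0$; for $k \in \{18, 22, 26\}$ the functional equation \eqref{fe} forces $\Lambda(1/2, \Delta_k) = 0$ while the formula above with $m = 0$ gives $\Lambda'(1/2, \Delta_k) > 0$, hence $m(\Delta_k) = 1$. There is no substantive obstacle here: the analytic input is the classical Mellin representation of $\Lambda$, and the crucial pointwise positivity on the imaginary axis is already supplied by Corollary \ref{deltak}; the content of the proof is precisely the clean transfer from positivity of $\Delta_k$ on $i\R_{>1}$ to positivity of every derivative of the completed $L$-function at the central point.
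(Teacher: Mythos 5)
Your argument is essentially identical to the paper's: both pass to the Mellin integral $\Lambda(\Delta_k, s)=\int_0^\infty \Delta_k(it)\,t^{s+\frac{k-1}{2}-1}\,dt$, split at $t=1$, apply the level-one transformation to symmetrize, and then read off $\Lambda^{(m)}(\tfrac12,\Delta_k) = \bigl(1+(-1)^{k/2+m}\bigr)\int_1^\infty \Delta_k(it)(\log t)^m t^{\frac{k-2}{2}}\,dt$, with Corollary \ref{deltak} supplying the pointwise positivity of $\Delta_k$ on $i\R_{>1}$. The only cosmetic difference is that you split into the two congruence classes of $k$ mod $4$ rather than keeping the unified factor $1+(-1)^{k/2+m}$; the content is the same.
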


\begin{proof}
Expanding the definition of $\Lambda(\Delta_k, s)$, making the substitution $y \mapsto \frac{1}{y}$ and applying the automorphy relation for $\Delta_k$ yields (cf. \cite[Theorem 9.7]{KKS12})
\begin{align*}
\Lambda(\Delta_k, s) &= \int_{0}^{\infty} \left(\sum_{n=1}^{\infty} \tau_k(n) e^{-2 \pi n y}  \right)y^{s+\frac{k-1}{2}-1} dy 
= \int_{0}^{\infty}  \Delta_k(iy) y^{s+\frac{k-1}{2}-1} dy \\
&=  \int_{0}^{1}  \Delta_k(iy) y^{s+\frac{k-1}{2}-1} dy +  \int_{1}^{\infty}  \Delta_k(iy) y^{s+\frac{k-1}{2}-1} dy  \\
&= \int_{1}^{\infty}  \Delta_k \left( \frac{i}{y} \right) y^{-s-\frac{k-1}{2}-1} dy +  \int_{1}^{\infty}  \Delta_k(iy) y^{s+\frac{k-1}{2}-1} dy  \\
&= (-1)^{\frac{k}{2}} \int_{1}^{\infty}  \Delta_k(iy) y^{\frac{k-1}{2}-s} dy +  \int_{1}^{\infty}  \Delta_k(iy) y^{s+\frac{k-1}{2}-1} dy   \\
&= \int_{1}^{\infty}  \Delta_k(iy) (  (-1)^{\frac{k}{2}} y^{\frac{k-1}{2}-s}+  y^{s+\frac{k-3}{2}} )dy. 
\end{align*}

We thus get
\begin{equation} \label{sup}
\Lambda^{(m)} \left (\Delta_k , \frac{1}{2} \right) = \left(1+(-1)^{\frac{k}{2}+m} \right)  \int_{1}^{\infty}  \Delta_k(iy) (\log y)^{m}    y^{\frac{k-2}{2}}dy 
\end{equation}

The assertions (1), (2) and (3) follow by combining Equation \eqref{sup} with Corollary \ref{deltak}.  \qedhere 

\end{proof}

\begin{corollary}
Assume Conjecture \ref{DRH} for $L(s, \Delta_k)$. We have that 
\begin{equation*}\label{tauk}
\sum_{p \leqslant x} \frac{\tau_{k}(p)}{p^{\frac{k}{2}}} = 
	\begin{cases}
	\dfrac{1}{2} \log \log x+c_k+o(1) & \text{if $k = 12, 16, 20 $} \\
	-\dfrac{1}{2} \log \log x+c_k+o(1) &\text{if $k = 18, 22, 26$}
\end{cases}
\end{equation*}
for some constant $c_k$.  In particular, in the sense of Definition \ref{Chebyshev}, the sequence $(\tau_k(p) p^{-\frac{k-1}{2}})_p$ has Chebyshev's bias towards being positive if $k \in \{12, 16, 20\}$ and a Chebyshev's bias towards being negative if $k \in \{18, 22, 26\}$.
\end{corollary}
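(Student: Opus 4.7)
The plan is to derive the corollary as an immediate consequence of Theorem~\ref{mainthm} applied to the newform $\Delta_k$, using the explicit computation of $m(\Delta_k)$ provided by Theorem~\ref{superpositivity}.

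First I would rewrite the left-hand side in the normalisation of Theorem~\ref{mainthm}. By definition of $\Delta_k$, its normalised Fourier coefficients are $a_{\Delta_k}(p) = \tau_k(p)/p^{(k-1)/2}$, so that
\[
\frac{a_{\Delta_k}(p)}{\sqrt{p}} = \frac{\tau_k(p)}{p^{(k-1)/2}\cdot p^{1/2}} = \frac{\tau_k(p)}{p^{k/2}}.
\]
Hence summing over primes $p \leq x$ converts the sum in the corollary into the sum appearing in Theorem~\ref{mainthm}.

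Next, since $\Delta_k$ is a cuspidal Hecke eigenform for $\SL_2(\mathbb Z)$ (hence a newform on $\Gamma_0(1)$) and we have assumed Conjecture~\ref{DRH} for $L(s, \Delta_k)$, Theorem~\ref{mainthm} yields a constant $c_k$ such that
\[
\sum_{p \leq x} \frac{\tau_k(p)}{p^{k/2}} = \left(\frac{1}{2} - m(\Delta_k)\right)\log \log x + c_k + o(1).
\]
Finally, I would invoke parts (2) and (3) of Theorem~\ref{superpositivity}: for $k \in \{12,16,20\}$ one has $m(\Delta_k)=0$, giving the coefficient $\tfrac{1}{2}$, whereas for $k \in \{18,22,26\}$ one has $m(\Delta_k)=1$, giving the coefficient $-\tfrac{1}{2}$. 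This yields the two cases in the displayed asymptotic. The final sentence about Chebyshev's bias is then just the direct application of Definition~\ref{Chebyshev}.

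There is essentially no obstacle here: all the nontrivial work, namely proving the general bias asymptotic of Theorem~\ref{mainthm} and pinning down $m(\Delta_k)$ via the Mizumoto positivity computation in Theorem~\ref{superpositivity}, has already been carried out. The corollary is a purely formal consequence of substituting one into the other.
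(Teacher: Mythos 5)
Your proof is correct and matches the paper's own argument: the corollary is obtained by combining Theorem~\ref{mainthm} with Theorem~\ref{superpositivity}, after the straightforward renormalisation $a_{\Delta_k}(p)/\sqrt{p}=\tau_k(p)/p^{k/2}$. You spell out the normalisation step explicitly, which the paper leaves implicit, but the route is the same.
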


\begin{proof}
This follows by combining Theorem \ref{mainthm6} and Theorem \ref{superpositivity}. \qedhere
\end{proof}

\appendix 

\section{Modular forms with real Fourier coefficients}

In Theorem \ref{mainthm6}, we restricted to modular forms with trivial nebentypus; Theorem \ref{rel} below shows that this restriction is necessary to study Chebyshev's bias in the framework of Definition \ref{Chebyshev}.

\begin{theorem} \label{rel}
Let $f \in S_k(\Gamma_0(N), \chi)$ be a newform with $a_f(n) \in \mathbb R$ for all $n \geq 1$.  Then either $\chi$ is trivial or $f$ is a CM form. 
\end{theorem}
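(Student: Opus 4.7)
The plan is to derive a pointwise identity relating $a_f(p)$, its complex conjugate, and $\chi(p)$ at the unramified primes, and then combine it with Serre's density theorem for vanishing of Fourier coefficients.

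First I would extract the key identity from the Euler factor at an unramified prime $p \nmid N$. Writing the Satake roots as $\alpha_p + \beta_p = a_f(p)$ with $\alpha_p \beta_p = \chi(p)$ (the nebentypus analogue of Equation \eqref{ap}), Deligne's bound \cite{Deligne1974} gives $|\alpha_p| = |\beta_p| = 1$. These three identities together force $\beta_p = \chi(p)\bar\alpha_p$, and a short computation then yields
\[
\overline{a_f(p)} \;=\; \bar\chi(p)\,a_f(p) \qquad \text{for every } p \nmid N.
\]
Invoking the hypothesis that $a_f(n) \in \mathbb{R}$ for all $n$, this rearranges to $(1 - \bar\chi(p))\,a_f(p) = 0$, so at every unramified prime either $a_f(p) = 0$ or $\chi(p) = 1$.

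Next I would assume $\chi$ is non-trivial and deduce that $f$ must be CM. By orthogonality of Dirichlet characters, the set $\{p : \chi(p) \neq 1\}$ has positive Dirichlet density $1 - 1/\mathrm{ord}(\chi)$, and by the previous step $a_f(p) = 0$ on this whole set. An appeal to Serre's theorem \cite[Corollaire~2, p.~174]{Ser81}---the same reference used in \S\ref{statement} to justify that the non-vanishing primes have density one in the non-CM case---states that for a non-CM newform $\{p : a_f(p) = 0\}$ has density zero, and that in the CM case the corresponding imaginary quadratic field $K$ is exactly the one detected by the vanishing pattern. The positive-density vanishing just obtained therefore forces $f$ to be CM in the sense recalled in \S\ref{statement}, completing the argument.

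The only delicate step is the clean derivation of $\overline{a_f(p)} = \bar\chi(p)\,a_f(p)$, which rests solely on the two relations $\alpha_p\beta_p = \chi(p)$ and $|\alpha_p| = 1$; I do not anticipate any serious obstacle beyond this, since Serre's theorem does the rest. One cosmetic point to be careful with is that the identity derived at unramified primes is already sufficient---the ramified primes $p \mid N$ form a finite set and play no role in the density argument.
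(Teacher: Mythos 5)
Your proof is correct and follows essentially the same route as the paper's. Both arguments hinge on the pointwise identity relating $\overline{a_f(p)}$, $a_f(p)$, and $\chi(p)$ at unramified primes, and both then invoke Serre's density theorem to force the CM conclusion once $\chi$ is non-trivial. The only genuine difference is in how that identity is obtained: the paper simply cites it from \cite[Equation (6.57)]{Iwa97} (where it can be derived elementarily from the adjoint relation $T_n^{*} = \bar\chi(n)T_n$ for the Petersson inner product, with no appeal to the Ramanujan bound), whereas you rederive it from the Satake parameter relations $\alpha_p + \beta_p = a_f(p)$, $\alpha_p\beta_p = \chi(p)$ together with Deligne's $|\alpha_p| = |\beta_p| = 1$. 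Your route is self-contained but invokes the full strength of Deligne's theorem for a step that admits a softer proof; otherwise the two arguments are interchangeable. (Incidentally, your version of the identity, $\overline{a_f(p)} = \bar\chi(p)\,a_f(p)$, and the paper's stated form, $\overline{a_f(n)} = \chi(n)\,a_f(n)$, are conjugate of one another; both yield the same dichotomy $a_f(p)=0$ or $\chi(p)=1$, so the discrepancy has no effect on the argument.)
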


\begin{proof}
By \cite[Equation (6.57)]{Iwa97}, we have that 
$
\overline{a_f(n)}= \chi(n) a_f(n) 
$ for all $n$ with $(n, N)=1$. Thus, if $a_f(n) \in \mathbb R$ for all $n \geq 1$, then either $\chi$ is trivial or $a_f(n)=0$ whenever $\chi(n) \neq 0$. In the latter case, we must have $a_f(p)=0$ for a set of primes having non-zero density. This implies, by a result of Serre \cite[Corollaire 2, p.174]{Ser81}, that $f$ is a CM form. 
\end{proof}

\end{document}